\documentclass[12pt]{amsart}

\def\R{\mathbb{R}}
\def\calC{\mathcal{C}}

\def\bmu{\bar{\mu}}
\def\ba{\bar{a}}
\def\brho{\bar{\rho}}

\theoremstyle{plain}
	\newtheorem{theorem}{Theorem}[section]
	\newtheorem{lemma}[theorem]{Lemma}
	\newtheorem{corollary}[theorem]{Corollary}
	
	\newtheorem{proposition}[theorem]{Proposition}
	\newtheorem{remark}[theorem]{Remark}
	
\theoremstyle{plain}

\setlength{\topmargin}{-1cm}
\setlength{\oddsidemargin}{0cm}
\setlength{\evensidemargin}{0cm}
\setlength{\textheight}{23cm}
\setlength{\textwidth}{16cm}

\makeatletter
 \@addtoreset{equation}{section}
\makeatother

\begin{document}
\title[Exact eigenvalues and eigenfunctinos]{Exact eigenvalues and eigenfunctions for\\ a one-dimensional Gel'fand problem}
\author{Yasuhito Miyamoto}
\thanks{The first author was supported by JSPS KAKENHI Grant Number 16K05225.}
\address{Graduate School of Mathematical Sciences, The University of Tokyo, 3-8-1 Komaba, Meguro-ku, Tokyo 153-8914, Japan}
\email{miyamoto@ms.u-tokyo.ac.jp}

\author{Tohru Wakasa}
\address{Department of Basic Sciences, Kyushu Institute of Technology, Sensuicho, Tobata-ku, Kitakyushu, Fukuoka 804-8550,
Japan}
\email{wakasa@mns.kyutech.ac.jp}

\begin{abstract}
It is known that every positive solution of a one-dimensional Gel'fand problem can be written explicitly.
In this paper we give exact expressions of all the eigenvalues and eigenfunctions of the linearized eigenvalue problem at each solution.
We study asymptotic behaviors of eigenvalues and eigenfunctions as the $L^{\infty}$-norm of the solution goes to the infinity.
We also study the problem $u''+\lambda e^{-u}=0$ and the associated linearized problem.
\end{abstract}
\date{\today}
\subjclass[2010]{Primary: 34L15, 35K57; Secondary: 34K18, 34K26}
\keywords{Exact eigenvalues, Explicit representation of eigenfunctions, Bifurcation diagram}
\maketitle

\section{Introduction and main results}
We consider the one-dimensional Gel'fand problem
\begin{equation}\label{GP+}
\begin{cases}
u''+\lambda e^u=0, & -1<x<1,\\
u(-1)=u(1)=0,
\end{cases}
\end{equation}
where $\lambda>0$ is a parameter.
A history of (\ref{GP+}) including multi-dimensional cases can be found in \cite[Section 1]{JS02}.
The set of the positive solutions of (\ref{GP+}) becomes a curve $\calC:=\{(\lambda,u)\}$ which can be parametrized by $\left\|u\right\|_{L^{\infty}([-1,1])}$.
Each solution on $\calC$ has the exact expression
\begin{equation}\label{S1E0-}
(\lambda,u)=\left(2e^{-\alpha}{\rm arcosh}^2\left(e^{{\alpha}/{2}}\right),\alpha-2\log\cosh\left(\sqrt{\frac{\lambda}{2}}e^{{\alpha }/{2}}x\right)\right),
\end{equation}
where $\alpha=\left\|u\right\|_{L^{\infty}([-1,1])}$ and $w={\rm arcosh}(z)$ denotes the inverse function of $z=\cosh(w):=(e^w+e^{-w})/2$.

Let $\tau(\alpha):={\rm arcosh}\left(e^{{\alpha}/{2}}\right)$.
Then,
\begin{equation}\label{tau+}
\alpha=2\log\cosh\tau,
\end{equation}
$\tau(0)=0$, $\tau(\infty)=\infty$, and $\tau$ is a homeomorphism from $[0,\infty)$ to $[0,\infty)$.
It is convenient to use $\tau$ ($0\le\tau<\infty$) as an independent variable instead of $\alpha$.
The above solution $(\lambda,u)$ can be written as follows:
\begin{equation}\label{S1E0}
(\lambda(\tau),u(x,\tau))=\left(\frac{2\tau^2}{\cosh^2\tau},2\log\frac{\cosh\tau}{\cosh(\tau x)}\right).
\end{equation}
Then, $\calC=\{(\lambda(\tau),u(x,\tau))|\ \tau>0\}$.
It is well known that the solution curve $\calC$ starts from $(\lambda,u)=(0,0)$, bends back once, and blows up at $\lambda=0$.
Since
\begin{equation}\label{S1E1+2}
\lambda'(\tau)=\frac{4\tau}{\cosh^2\tau}(1-\tau\tanh\tau),
\end{equation}
we see the following: $\lambda'(\tau)>0$ for $0<\tau<\tau_1$, $\lambda'(\tau_1)=0$, $\lambda'(\tau)<0$ for $\tau>\tau_1$, and $\lim_{\tau\to\infty}\lambda(\tau)=0$.
Here $\tau_1$ is the (unique) solution of the equation
\begin{equation}\label{S1E1+1}
\tau\tanh\tau=1\ \ \textrm{and}\ \ \tau>0.
\end{equation}

Eigenvalues and eigenfunctions are important in both qualitative and quantitative studies of a steady state of the associated parabolic problem.
In this paper we give exact expressions of the eigenvalues and eigenfunctions of the problem
\begin{equation}\label{EVP+}
\begin{cases}
\varphi''+\lambda(\tau) e^{u(\tau)}\varphi=-\mu\varphi, & -1<x<1,\\
\varphi(-1)=\varphi(1)=0.
\end{cases}
\end{equation}
In the last section we also consider the problem
\begin{equation}\label{GP-}
\begin{cases}
u''+\lambda e^{-u}=0,\ -1<x<1,\\
u(-1)=u(1)=0
\end{cases}
\end{equation}
and give exact expressions of the eigenvalues and eigenfunctions of the linearized problem
\begin{equation}\label{EVP-}
\begin{cases}
\varphi''+\lambda e^{-u}=-\mu\varphi, & -1<x<1,\\
\varphi(-1)=\varphi(1)=0.
\end{cases}
\end{equation}
All the results about (\ref{EVP-}) are summarized in Theorem~\ref{T3}.

The first main result is about the exact expression of the eigenvalue of (\ref{EVP+}).
\begin{theorem}\label{T1}
Let $\{\mu_j\}_{j=1}^{\infty}$, $\mu_1<\mu_2<\cdots$, be the eigenvalues of (\ref{EVP+}).
Let $\tau_1$ be the solution of (\ref{S1E1+1}).
Then the following hold:\\
(i) If $0<\tau<\tau_1$, then $\mu_j>0$ for $j\ge 1$ and $\mu_j$, $j\ge 1$, is a (unique) solution of the problem
\begin{equation}\label{T1E1}
\tan\left(\sqrt{\mu_j}-\frac{\pi}{2}(j-1)\right)=\frac{\sqrt{\mu_j}}{\tau\tanh\tau}\ \ \textrm{and}\ \ \frac{\pi}{2}(j-1)<\sqrt{\mu_j}<\frac{\pi}{2}j.
\end{equation}
(ii) If $\tau=\tau_1$, then $\mu_1=0$, $\mu_j>0$ for $j\ge 2$, and $\mu_j$, $j\ge 2$, satisfies (\ref{T1E1}).\\
(iii) If $\tau>\tau_1$, then $\mu_1<0$, $\mu_1$ is a (unique) negative solution of
\begin{equation}\label{T1E2}
\tanh\sqrt{-\mu_1}=\frac{\sqrt{-\mu_1}}{\tau\tanh\tau},
\end{equation}
$\mu_j>0$ for $j\ge 2$, and $\mu_j$, $j\ge 2$, satisfies (\ref{T1E1}).
\end{theorem}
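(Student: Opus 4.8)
The plan is to reduce \eqref{EVP+} to an explicitly solvable Schr\"odinger equation. First I would insert the closed forms from \eqref{S1E0}: since $e^{u(x,\tau)}=\cosh^2\tau/\cosh^2(\tau x)$, the coefficient simplifies to $\lambda(\tau)e^{u(\tau)}=2\tau^2/\cosh^2(\tau x)$, so \eqref{EVP+} becomes
\[
\varphi'' + \frac{2\tau^2}{\cosh^2(\tau x)}\varphi = -\mu\varphi, \qquad \varphi(\pm 1)=0,
\]
a P\"oschl--Teller type equation. I would then exhibit two explicit independent solutions. For $\mu>0$, writing $s=\sqrt{\mu}$, a direct substitution shows that $s\sin(sx)+\tau\cos(sx)\tanh(\tau x)$ and $-s\cos(sx)+\tau\sin(sx)\tanh(\tau x)$ solve the ODE, the key point being that the $\mathrm{sech}^2$-terms cancel; for $\mu<0$, writing $\sigma=\sqrt{-\mu}$, the analogous pair is obtained by replacing the trigonometric functions with hyperbolic ones. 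Each verification is routine once the ansatz $e^{\pm iky}(\tanh y\mp ik)$ in the scaled variable $y=\tau x$ is guessed.

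Next, because the potential is even in $x$, I would split the eigenfunctions into even and odd ones; each solution above has definite parity, so only the condition at $x=1$ survives. For $\mu>0$ the even solution gives $\tan\sqrt{\mu}=\sqrt{\mu}/(\tau\tanh\tau)$ and the odd one gives $\cot\sqrt{\mu}=-\sqrt{\mu}/(\tau\tanh\tau)$; using $\tan(\theta-\tfrac{\pi}{2}m)=\tan\theta$ for even $m$ and $=-\cot\theta$ for odd $m$, both collapse into the single relation \eqref{T1E1}, with even/odd eigenfunctions corresponding to odd/even indices $j$. For $\mu<0$ the even solution yields exactly \eqref{T1E2}.

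Then I would pin down the indexing and the signs. By Sturm--Liouville theory the $j$-th eigenfunction has exactly $j-1$ interior zeros and alternates in parity, the ground state being even and positive. On each interval $(\tfrac{\pi}{2}(j-1),\tfrac{\pi}{2}j)$ the left side of \eqref{T1E1} increases from $0$ to $+\infty$ while the right side is a positive increasing line, giving a unique root; comparing slopes at the origin for $j=1$ shows the root lies in $(0,\tfrac{\pi}{2})$ precisely when $\tau\tanh\tau<1$, i.e. $\tau<\tau_1$, and migrates into $\mu<0$ when $\tau>\tau_1$. For the negative part, \eqref{T1E2} rewritten as $\tanh\sigma/\sigma=1/(\tau\tanh\tau)$ has a unique positive root iff $\tau\tanh\tau>1$, i.e. $\tau>\tau_1$, matching (iii); and the threshold $\mu_1=0$ is attained exactly at $\tau=\tau_1$ via the $\mu=0$ even solution $\tau x\tanh(\tau x)-1$, whose boundary condition is precisely \eqref{S1E1+1}.

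The main obstacle is to rule out a second negative eigenvalue. The candidate odd negative solution leads to $\sigma\tanh\sigma=\tau\tanh\tau$, whose only root is $\sigma=\tau$; but at $\sigma=\tau$ the corresponding function degenerates to zero, so no odd negative eigenvalue exists. Hence at most one eigenvalue is negative, it is the even ground state, and together with the counting above this yields the trichotomy (i)--(iii). The one part requiring genuine care is checking that the Sturm zero-count is simultaneously consistent, for every $j$, with the interval constraints imposed in \eqref{T1E1}.
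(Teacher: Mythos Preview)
Your proposal is correct and takes a genuinely different route from the paper. The paper builds the eigenfunctions through the Wakasa--Yotsutani framework: it seeks $\varphi=\sqrt{h(u(x))}\,W(\theta(x))$, derives a third-order ODE \eqref{S1E5_5} for $h$ in the variable $u$, exhibits the exact solution $h(u)=\tfrac{2\mu}{\lambda}+e^{\alpha}-e^{u}$, and then computes $\theta$ and $W$ to arrive at the amplitude--phase representation of Theorem~\ref{T2}; the eigenvalue relations drop out of the boundary condition, and the index $j$ is read off from the phase $\theta(x)$. For $\mu_1=0$ the paper instead differentiates $u(x,\tau)$ in $\tau$, and for $\mu_1<0$ it reruns the machinery with the modified $h$ of \eqref{S3E1-2}. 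You bypass all of this by recognizing the P\"oschl--Teller potential and writing down the parity solutions $-s\cos(sx)+\tau\sin(sx)\tanh(\tau x)$ and $s\sin(sx)+\tau\cos(sx)\tanh(\tau x)$ directly; expanding the paper's expression $\sqrt{a^{2}+\tanh^{2}(\tau x)}\sin(\sqrt{\mu}x+\arctan(\cdot)+\theta_{1})$ with the addition law shows the two forms coincide. The paper's approach buys generality---the same machinery handles $f(u)=e^{-u}$, $\sin u$, $u-u^{3}$---and its amplitude--phase form makes zero counting transparent; your approach is shorter and more elementary for this specific potential, and your exclusion of a second negative eigenvalue via the degeneracy of the odd hyperbolic solution at $\sigma=\tau$ is a nice touch absent from the paper. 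One remark: the Sturm zero-count consistency you flag at the end is not actually needed once you know your even and odd solutions form a fundamental system---the eigenvalues are then exactly the roots of the two boundary relations, and simply ordering them yields the index $j$; the interval constraint in \eqref{T1E1} then follows from the one-root-per-half-period count you already established.
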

It is known that the portion of the curve $\{(\lambda(\tau),u(x,\tau))|\ 0<\tau<\tau_1\}\subset\calC$ consists only of minimal solutions.
Hence, each solution is stable.
Indeed, it follows from Theorem~\ref{T1}~(i) that $\mu_j>0$ for $j\ge 1$.

The second main result is about the exact expression of the eigenfunction.
\begin{theorem}\label{T2}
Let $\{\mu_j\}_{j=1}^{\infty}$ be the eigenvalues given in Theorem~\ref{T1}, and let $\varphi_j(x)$ be the eigenfunction corresponding to $\mu_j$.
Let $\tau_1$ be the solution of (\ref{S1E1+1}).
Then the following hold:\\
(i) If $0<\tau<\tau_1$, then, for $j\ge 1$,\\
\begin{equation}\label{T2E1}
\varphi_j(x)=\sqrt{\frac{\mu_j}{\tau^2}+\tanh^2(\tau x)}\sin\left(\sqrt{\mu_j}x+\arctan\left(\frac{\tau\tanh(\tau x)}{\sqrt{\mu_j}}\right)+\frac{\pi}{2}j\right).
\end{equation}
(ii) If $\tau=\tau_1$, then
\begin{equation}\label{T2E2}
\varphi_1(x)=\tanh\tau_1-x\tanh(\tau_1 x),
\end{equation}
and $\varphi_j(x)$, $j\ge 2$, is given by (\ref{T2E1}).\\
(iii) If $\tau>\tau_1$, then
\begin{equation}\label{T2E3}
\varphi_1(x)=\sqrt{-\mu_1}\cosh(\sqrt{-\mu_1}x)-\tau\sinh(\sqrt{-\mu_1}x)\tanh(\tau x),
\end{equation}
and $\varphi_j(x)$, $j\ge 2$, is given by (\ref{T2E1}).
\end{theorem}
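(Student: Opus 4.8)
The plan is to exploit the fact that substituting the explicit solution (\ref{S1E0}) turns (\ref{EVP+}) into an exactly solvable Schr\"odinger equation. Since
\begin{equation}
\lambda(\tau)e^{u(\tau)}=\frac{2\tau^2}{\cosh^2\tau}\cdot\frac{\cosh^2\tau}{\cosh^2(\tau x)}=\frac{2\tau^2}{\cosh^2(\tau x)},
\end{equation}
the problem reads $\varphi''+(\mu+2\tau^2/\cosh^2(\tau x))\varphi=0$ with $\varphi(\pm 1)=0$, and after the rescaling $t=\tau x$ it becomes the $\ell=1$ P\"oschl--Teller equation $\varphi_{tt}+(k^2+2/\cosh^2 t)\varphi=0$ with $k=\sqrt{\mu}/\tau$, whose solutions are elementary. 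First I would record the single complex solution $\Phi(t)=(\tanh t-ik)e^{ikt}$ and verify by direct substitution that it solves the equation; this is the only genuinely computational input, and it makes every formula in the theorem transparent.

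For case (i), $\mu>0$, I would write $\Phi=r\,e^{i(kt+\theta)}$ with $r=\sqrt{k^2+\tanh^2 t}$ and $\theta=\arg(\tanh t-ik)$, so that the general real solution is $C\,r\,\sin(kt+\theta+\delta)$ with a free constant phase $\delta$. Using the co-function identity $\arctan a+\arctan(1/a)=\tfrac{\pi}{2}$ (for $a>0$), which gives $\arg(\tanh t-ik)=\arctan(\tanh t/k)-\tfrac{\pi}{2}$, and reverting to $x$, this becomes exactly the amplitude--phase form (\ref{T2E1}) once $\delta$ is chosen so that the constant part of the phase equals $\tfrac{\pi}{2}j$. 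The amplitude $\sqrt{\mu/\tau^2+\tanh^2(\tau x)}$ is strictly positive here, so $\varphi_j\not\equiv 0$. It then remains to impose $\varphi_j(1)=0$: the requirement that $\sqrt{\mu}+\arctan(\tau\tanh\tau/\sqrt{\mu})+\tfrac{\pi}{2}j$ lie in $\pi\mathbb{Z}$ reduces, via the same co-function identity together with $\tan(\theta+\tfrac{\pi}{2}(j-1))=\tan(\theta-\tfrac{\pi}{2}(j-1))$ (the arguments differ by $\pi(j-1)$), precisely to the eigenvalue equation (\ref{T1E1}). The boundary condition at $x=-1$ is automatic, because a short parity check shows $\varphi_j$ is even for odd $j$ and odd for even $j$. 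Since the eigenvalues of a regular Sturm--Liouville problem are simple, verifying that $\varphi_j$ solves the ODE at $\mu=\mu_j$ and satisfies both boundary conditions already identifies it, up to normalization, as the $j$-th eigenfunction; no independent nodal count is needed.

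For the negative eigenvalue in case (iii) I would continue $\Phi$ to $k=i\kappa$ with $\kappa=\sqrt{-\mu_1}/\tau$; the real solution built from $(\tanh t\pm\kappa)e^{\mp\kappa t}$ gives, after returning to $x$ and clearing the factor $\tau$, exactly $\sqrt{-\mu_1}\cosh(\sqrt{-\mu_1}\,x)-\tau\sinh(\sqrt{-\mu_1}\,x)\tanh(\tau x)$, and $\varphi_1(1)=0$ collapses to (\ref{T1E2}). For the zero eigenvalue in case (ii) I would produce the relevant solution structurally rather than as a limit: the scaling invariance $u(x)\mapsto u(cx)+2\log c$ of $u''+\lambda e^u=0$ yields, by differentiation at $c=1$, the solution $x u_x+2=2(1-\tau x\tanh(\tau x))$ of the $\mu=0$ linearized equation, which vanishes at $x=1$ exactly when $\tau\tanh\tau=1$, i.e.\ at $\tau=\tau_1$; dividing by $2\tau_1$ and using $\tau_1\tanh\tau_1=1$ gives (\ref{T2E2}). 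This also explains structurally why $\mu_1=0$ occurs precisely at the turning point $\tau_1$.

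The main obstacle I anticipate is purely trigonometric bookkeeping in case (i): matching the $t$-dependent argument $\arg(\tanh t-ik)$ and the constant phase $\tfrac{\pi}{2}j$ to the exact form (\ref{T1E1}), while keeping track of the parity of $j$, the branch of $\arctan$, and the sign conventions so that the \emph{single} formula (\ref{T2E1}) serves all $j$ simultaneously. Everything else is either the one substitution verifying that $\Phi$ solves the P\"oschl--Teller equation or a direct boundary-value check.
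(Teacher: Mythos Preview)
Your argument is correct and takes a genuinely different route from the paper. The paper never writes down the P\"oschl--Teller form; instead it invokes the Wakasa--Yotsutani machinery of Section~2: one searches for the eigenfunction in the factored form $\varphi=\sqrt{h(u(x))}\,W(\theta(x))$, where $h$ solves the third-order key ODE (\ref{S1E5_5}). For $f(u)=e^u$ the paper finds the exact solution $h(u)=\tfrac{2\mu}{\lambda}+e^{\alpha}-e^{u}$, from which $h(u(x))=\cosh^2\tau\,(\mu/\tau^2+\tanh^2(\tau x))$ and $\theta(x)=\sqrt{\mu}\,x+\arctan(\tau\tanh(\tau x)/\sqrt{\mu})$ follow by explicit integration; $W$ then satisfies $W''+W=0$ (or $W''-W=0$ when $\mu<0$), and the amplitude--phase formula (\ref{T2E1}) drops out. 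For $\mu=0$ the paper differentiates the one-parameter family $u(x,\tau)$ in $\tau$ rather than using your scaling symmetry, but since $u_\tau$ and $xu_x+2$ are both generators of symmetries of $u''+\lambda e^u=0$ the resulting eigenfunctions coincide (and indeed $\tanh\tau_1=1/\tau_1$ makes your normalization match (\ref{T2E2})). What each approach buys: yours is shorter and entirely elementary---one verifies by hand that $(\tanh t-ik)e^{ikt}$ solves the reduced equation and reads off everything---while the paper's framework is nonlinearity-agnostic, so the same template produces the analogous results for $f(u)=e^{-u}$ in Section~6 (and, in the cited works, for $\sin u$ and $u-u^3$) without having to recognize a named integrable potential each time.
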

Using Theorems~\ref{T1} and \ref{T2}, we prove the following asymptotic behaviors:
\begin{corollary}\label{C1}
Let $\{\mu_j\}_{j=1}^{\infty}$ be the eigenvalues given in Theorem~\ref{T1}.
Then the following hold:\\
(i) For $j\ge 1$,\\
\begin{equation}\label{C1E1}
\lim_{\tau\downarrow 0}\sqrt{\mu_j}=\frac{\pi}{2}j.
\end{equation}
(ii)
\begin{equation}\label{C1E2}
\lim_{\tau\to\infty}\mu_1=-\infty\ \textrm{and, for}\ j\ge 2, \lim_{\tau\to\infty}\sqrt{\mu_j}=\frac{\pi}{2}(j-1).
\end{equation}
\end{corollary}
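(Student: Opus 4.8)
The plan is to pass to the limit directly in the transcendental characterizations of Theorem~\ref{T1}. Throughout I write $k:=\tau\tanh\tau$ and record two elementary facts: $k\downarrow 0$ as $\tau\downarrow 0$ (because $\tanh\tau\sim\tau$, so $k\sim\tau^2$), and $k\to\infty$ as $\tau\to\infty$. For the branches on which $\mu_j>0$ it is convenient to set $s_j:=\sqrt{\mu_j}-\frac{\pi}{2}(j-1)$, so that the constraint in (\ref{T1E1}) becomes $s_j\in(0,\frac{\pi}{2})$ and the equation reads
\begin{equation*}
k\tan s_j=\sqrt{\mu_j}=s_j+\frac{\pi}{2}(j-1),\qquad\text{equivalently}\qquad k=\frac{s_j+\frac{\pi}{2}(j-1)}{\tan s_j}=:H_j(s_j).
\end{equation*}
Existence and uniqueness of $s_j$ for each $\tau$ are already furnished by Theorem~\ref{T1}, so only the limiting position of $s_j$ must be located.

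For (i) I would analyse $H_j$ on $(0,\frac{\pi}{2})$. One checks that $H_j(s)\to 0$ as $s\to(\pi/2)^{-}$ (bounded numerator against $\tan s\to\infty$), while $H_j(0^+)=1$ when $j=1$ and $H_j(0^+)=+\infty$ when $j\ge 2$; in particular $H_j$ is continuous and strictly positive on $(0,\frac{\pi}{2})$ and vanishes in the limit only at the right endpoint. Hence for each $\e>0$ the infimum of $H_j$ over $(0,\frac{\pi}{2}-\e]$ is some $m=m(\e,j)>0$, and once $k<m$ the relation $k=H_j(s_j)$ forces $s_j>\frac{\pi}{2}-\e$. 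Letting $\tau\downarrow 0$, hence $k\downarrow 0$, we obtain $s_j\to\frac{\pi}{2}$ and therefore $\sqrt{\mu_j}=s_j+\frac{\pi}{2}(j-1)\to\frac{\pi}{2}j$, which is (\ref{C1E1}).

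For (ii) with $j\ge 2$ I let $\tau\to\infty$, so $k\to\infty$. Since Theorem~\ref{T1} gives $\sqrt{\mu_j}<\frac{\pi}{2}j$, the right-hand side of (\ref{T1E1}) satisfies $\tan s_j=\sqrt{\mu_j}/k\to 0$, and $s_j\in(0,\frac{\pi}{2})$ then yields $s_j\to 0^+$, whence $\sqrt{\mu_j}\to\frac{\pi}{2}(j-1)$. For the first eigenvalue in this regime I use (\ref{T1E2}): writing $t:=\sqrt{-\mu_1}>0$ it becomes $k=t/\tanh t=t\coth t=:g(t)$. Since
\begin{equation*}
\frac{d}{dt}\bigl(t\coth t\bigr)=\coth t-t\,\mathrm{csch}^2 t=\frac{\tfrac{1}{2}\sinh 2t-t}{\sinh^2 t}>0
\end{equation*}
by the inequality $\sinh 2t>2t$, the function $g$ increases strictly from $g(0^+)=1$ to $+\infty$; thus $k\to\infty$ forces $t\to\infty$, i.e. $\mu_1=-t^2\to-\infty$, completing (\ref{C1E2}).

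The asymptotic identifications through $\tan s_j\to 0$ or $\tan s_j\to\infty$ are routine; the one point demanding care is the $\tau\downarrow 0$ limit for $j=1$, where the product $k\tan s_1$ has the indeterminate form $0\cdot\infty$, so a direct substitution is inconclusive. Recasting the equation as $k=H_1(s_1)=s_1\cot s_1$, a function decreasing from $1$ to $0$, is the step that removes this ambiguity. The monotonicity computations for $g$ and $H_j$ are the only genuine calculations, and each reduces to an elementary inequality ($\sinh 2t>2t$, respectively $\sin 2s<2s$).
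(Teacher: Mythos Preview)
Your proof is correct. For part~(ii) it is essentially the same as the paper's: both arguments plug $\tau\to\infty$ directly into the transcendental characterizations~(\ref{T1E1}) and~(\ref{T1E2}); the paper packages the $j\ge 2$ case as the explicit two-sided bound $\frac{\pi}{2}(j-1)\le\sqrt{\mu_j}\le\frac{\pi}{2}(j-1)+\arctan\!\big(\frac{\pi j}{2\tau\tanh\tau}\big)$, while you obtain the same conclusion from $\tan s_j=\sqrt{\mu_j}/k\to 0$, and for $j=1$ the paper studies $y(x)=(\tanh x)/x$ decreasing from $1$ to $0$, which is exactly the reciprocal of your increasing $g(t)=t\coth t$.

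The genuine difference is in part~(i). The paper does not touch the transcendental equation at all: it simply observes that $\lambda(\tau)e^{u(\tau)}\to 0$ uniformly on $[-1,1]$ as $\tau\downarrow 0$, so the eigenvalue problem~(\ref{EVP+}) degenerates to $\varphi''=-\mu\varphi$ with Dirichlet conditions, whose spectrum is $\{(\pi j/2)^2\}$. This is shorter and more conceptual, but it implicitly appeals to continuous dependence of Sturm--Liouville eigenvalues on the potential. Your argument is fully self-contained within Theorem~\ref{T1}: recasting~(\ref{T1E1}) as $k=H_j(s_j)$ and showing via $\sin 2s<2s$ that $H_j$ is strictly decreasing to $0$ on $(0,\pi/2)$ forces $s_j\to\pi/2$ when $k\downarrow 0$. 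Each approach has its merit---the paper's is quicker if one is willing to quote spectral continuity, while yours keeps everything at the level of the explicit algebraic relations already established.
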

\begin{corollary}\label{C2}
Let $\varphi_j(x)$ be the eigenfunction given in Theorem~\ref{T2}.\\
(i) As $\tau\to\infty$,
\[
\frac{1}{\tau}\varphi_1\left(\frac{y}{\tau}\right)\to\frac{1}{\cosh y}\ \ \textrm{in}\ \ C_{loc}(\R).
\]
(ii) For $j\ge 2$, as $\tau\to\infty$,
\[
\varphi_j(x)\to\bar{\varphi}_j(x)
\ \ \textrm{in}\ \ C_{loc}([-1,0)\cup(0,1]),
\]
where
\[
\bar{\varphi}_j(x):=
\begin{cases}
\sin\left(\frac{\pi}{2}(j-1)x+\frac{\pi}{2}(j+1)\right) & \textrm{if}\ 0<x\le 1,\\
\sin\left(\frac{\pi}{2}(j-1)x+\frac{\pi}{2}(j-1)\right) & \textrm{if}\ -1\le x<0.
\end{cases}
\]
\end{corollary}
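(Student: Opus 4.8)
The plan is to prove both parts by inserting the explicit eigenfunction formulas from Theorem~\ref{T2} and passing to the limit, the only genuinely new input being the eigenvalue asymptotics already recorded in Corollary~\ref{C1}.

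For part (i), since $\tau>\tau_1$ for all large $\tau$, I would start from the expression (\ref{T2E3}) and abbreviate $\nu:=\sqrt{-\mu_1}$. Substituting $x=y/\tau$ (so that $\tanh(\tau x)=\tanh y$) and dividing by $\tau$ gives
\[
\frac{1}{\tau}\varphi_1\left(\frac{y}{\tau}\right)=\frac{\nu}{\tau}\cosh\left(\frac{\nu}{\tau}y\right)-\sinh\left(\frac{\nu}{\tau}y\right)\tanh y.
\]
The key preliminary step is to show $\nu/\tau\to1$. By Corollary~\ref{C1}~(ii) we have $\mu_1\to-\infty$, hence $\nu\to\infty$ and $\tanh\nu\to1$; feeding this into (\ref{T1E2}) together with $\tanh\tau\to1$ yields $\nu/(\tau\tanh\tau)\to1$ and therefore $\nu/\tau\to1$. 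On any compact $y$-interval the three factors above then converge uniformly, so the displayed quantity tends to $\cosh y-\sinh y\tanh y=1/\cosh y$, which is exactly the asserted $C_{loc}(\R)$ limit.

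For part (ii), I would start from (\ref{T2E1}), which by Theorem~\ref{T2} represents $\varphi_j$ for every $j\ge2$ in all three cases. By Corollary~\ref{C1}~(ii) we have $\sqrt{\mu_j}\to\frac{\pi}{2}(j-1)$, so the amplitude factor satisfies $\sqrt{\mu_j/\tau^2+\tanh^2(\tau x)}\to1$ for each fixed $x\ne0$, since $\mu_j/\tau^2\to0$ and $\tanh^2(\tau x)\to1$. The decisive term is the phase $\arctan\!\big(\tau\tanh(\tau x)/\sqrt{\mu_j}\big)$: for $x>0$ we have $\tanh(\tau x)\to1$, so its argument tends to $+\infty$ and the arctangent to $\pi/2$, while for $x<0$ the argument tends to $-\infty$ and the arctangent to $-\pi/2$. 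Substituting these two limits separately into the sine and collecting the $\frac{\pi}{2}j$ term reproduces precisely the two branches of $\bar{\varphi}_j(x)$.

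The main obstacle is organizing the limits so that the convergence is genuinely uniform on compact sets rather than merely pointwise. In part (ii) this is where the restriction to $[-1,0)\cup(0,1]$ is forced: on a compact set bounded away from $x=0$ both $\tanh(\tau x)$ and the phase converge uniformly to their constant limits, but the limiting phase jumps by $\pi$ across $x=0$, so no continuous (let alone uniform) limit can survive on a neighborhood of the origin. In part (i) the only delicate point is establishing $\nu/\tau\to1$ from (\ref{T1E2}) without reference to rates; once that limit is in hand the remaining estimates are elementary continuity arguments for $\cosh$, $\sinh$, and $\tanh$ on compact intervals.
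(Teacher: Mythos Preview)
Your proposal is correct and follows essentially the same approach as the paper: for (i) you derive $\sqrt{-\mu_1}/\tau\to 1$ from (\ref{T1E2}) and Corollary~\ref{C1}(ii) and then pass to the limit in (\ref{T2E3}), and for (ii) you use $\tanh(\tau x)\to\mathrm{sign}(x)$ in $C_{loc}([-1,0)\cup(0,1])$ together with (\ref{T2E1}) and (\ref{C1E2}), exactly as the paper does. Your write-up is in fact more detailed than the paper's, which for (ii) simply records the $\tanh$ limit and states that the conclusion follows.
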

It follows from Corollary~\ref{C2}~(i) that the first eigenfunction $\varphi_1(x)$ converges to $\pi\delta_0(x)$ in a weak sense, where $\delta_0(x)$ is the delta measure.

\begin{remark}
Let $\Omega$ be a two-dimensional bounded domain with smooth boundary.
Let $\{(\lambda_i,u_i)\}_{i=1}^{\infty}$ be a sequence of solutions of the problem
\[
\begin{cases}
\Delta u+\lambda e^u=0, & \textrm{in}\ \Omega,\\
u=0, & \textrm{on}\ \partial\Omega.
\end{cases}
\]
Nagasaki-Suzuki~\cite{NS90} showed that as $\lambda_i\downarrow 0$, the sequence $\left\{\int_{\Omega}\lambda_ie^{u_i}dx\right\}_{i=1}^{\infty}$ accumulates at $0$, $8\pi m$ $(m\in\{1,2,3,\ldots\})$, or $\infty$.
In one-dimensional case by direct calculation we have
\[
\int_{-1}^1\lambda(\tau) e^{u(\tau)}dx=2\tau(\cos(2\arctan e^{-\tau})-\cos(2\arctan e^{\tau}))\to\infty\ \ \textrm{as}\ \ \tau\to\infty,\ \ \textrm{and}
\]
\[
\int_{-1}^1\lambda(\tau)e^{u(\tau)}dx\to 0\ \ \textrm{as}\ \ \tau\to 0.
\]
Therefore, $\left\{\int_{-1}^1\lambda(\tau_i)e^{u(\tau_i)}dx\right\}_{i=1}^{\infty}$ does not accumulate at a finite positive number as $\lambda(\tau_i)\downarrow 0$.
\end{remark}

\begin{remark}
By (\ref{S1E0}) we see that $\sqrt{\lambda(\tau)e^{u(\tau)}}=2\tau/\cosh(\tau x)$.
Since
\[
\int_{-1}^1\sqrt{\lambda(\tau)e^{u(\tau)}}dx=2\sqrt{2}(\arctan e^{\tau}-\arctan e^{-\tau})\to\sqrt{2}{\pi}\ \ \textrm{as}\ \ \tau\to\infty,
\]
we can show that, as $\tau\to\infty$, $\sqrt{\lambda(\tau)e^{u(\tau)}}$ converges to $\sqrt{2}\pi\delta_0(x)$ in a weak sense.
\end{remark}

Let us mention technical details.
Wakasa-Yotsutani~\cite{WY08} constructed a theory, which is explained in Section~2 of the present paper, in order to obtain exact eigenvalues and eigenfunctions for a one-dimensional elliptic problem with general nonlinearity.
They derived a key ODE (\ref{S1E5_5}) below, and show that if the ODE has an exact solution, then all the eigenvalues and eigenfunctions can be written explicitly.
Then they applied the theory to a Neumann problem in the case $f(u)=\sin u$ (resp. $f(u)=u-u^3$) and obtained all exact eigenvalues in \cite{WY08} (resp. \cite{WY15}) and all exact eigenfunctions in \cite{WY10} (resp. \cite{WY16}).
See also \cite{W06} for the case $f(u)=u-u^3$.
In this paper we obtain exact solutions (\ref{S1E8}) and (\ref{S3E1-2}) of the key ODE (\ref{S1E5_5}) and apply the theory to a Dirichlet problem in the case $f(u)=e^u$.
We do not use the Jacobi elliptic functions, while they appear in the case $f(u)=u-u^3$ or $f(u)=\sin u$.

This paper consists of five sections.
In Section~2 we recall basic properties about eigenvalues and eigenfunctions.
We also recall the theory of \cite{WY08} about exact eigenvalues and eigenfunctions.
In Section~3 we give an exact expression of the eigenfunction when the associated eigenvalue is positive.
We see that all the eigenvalues except the first one are positive.
Thus, only the first eigenvalue may be non-positive.
In Section~4 we study the case where the first eigenvalue is zero.
In Section~5 we study the case where the first eigenvalue is negative.
Moreover, we prove Theorems~\ref{T1} and \ref{T2} and Corollaries~\ref{C1} and \ref{C2}, using results obtained in Sections 3, 4, and 5.
In Section~6 we consider the problem (\ref{GP-}) and give exact expressions of the eigenvalues and eigenfunctions of (\ref{EVP-}).

\section{General nonlinearity}
Let $f$ be an arbitrary $C^2$-function.
We consider the one-dimensional elliptic Dirichlet problem with general nonlinearity
\begin{equation}\label{S1E2}
\begin{cases}
u''+\lambda f(u)=0, & -1<x<1,\\
u(-1)=u(1)=0.
\end{cases}
\end{equation}
Let $(\lambda,u(x))$ be a solution of (\ref{S1E2}).
The linearized eigenvalue problem at $(\lambda,u(x))$ becomes
\begin{equation}\label{S1E3}
\begin{cases}
\varphi''+\lambda f'(u)\varphi=-\mu\varphi, & -1<x<1,\\
\varphi(-1)=\varphi(1)=0.
\end{cases}
\end{equation}
First, we recall basic properties about the eigenvalue and eigenfunction for the one-dimensional problem (\ref{S1E3}).
\begin{proposition}\label{P1}
Let $\{\mu_j\}_{j=1}^{\infty}$ be the eigenvalues of (\ref{S1E3}), and let $\varphi_j(x)$ be the eigenfunction corresponding to $\mu_j$.
Then the following hold:\\
(i) Each eigenvalue $\mu_j$ is real and simple, and $\{\mu_j\}_{j=1}^{\infty}$ satisfies $\mu_1<\mu_2<\cdots$.\\
(ii) For $j\ge 1$, the zero number $\sharp\{x\in(-1,1)|\ \varphi_j(x)=0\}$ is $j-1$.\\
(iii) Assume that the solution $u(x)$ of (\ref{S1E2}) is even.
If $j(\ge 1)$ is odd, then $\varphi_j(x)$ is an even function.
If $j(\ge 2)$ is even, then $\varphi_j(x)$ is an odd function.
\end{proposition}
We omit the proof of Proposition~\ref{P1}.
See \cite[Theorem 2.1 in p.212]{CL55} for (i) and (ii).
The assertion (iii) follows from an easy symmetric argument.

We consider the positive solution of (\ref{S1E2}).
Then every solution is an even function, and hence the conclusions of Proposition~\ref{P1}~(iii) hold.

Next, we study exact expressions of the eigenvalue and eigenfunction.
We look for the eigenfunction of the form $\varphi(x)=\sqrt{\phi(x)}$.
Substituting $\sqrt{\phi(x)}$ into (\ref{S1E3}), we see that $\phi(x)$ satisfies
\begin{equation}\label{S1E4}
2\phi\phi''-\phi'^2+4(\lambda f'(u)+\mu)\phi^2=0.
\end{equation}
Let $\Phi(x):=2\phi''\phi-\phi'^2+4(\lambda f'(u)+\mu)\phi^2$.
Then,
\[
\Phi'(x)=
2\phi\left\{\phi'''+4(\lambda f'(u)+\mu)\phi'+2f''(u)u'\phi\right\}.
\]
We consider the equation
\begin{equation}\label{S1E4+1}
\phi'''+4(\lambda f'(u)+\mu)\phi'+2f''(u)u'\phi=0.
\end{equation}
If $\phi(x)$ is a solution of (\ref{S1E4+1}), then $\Phi'(x)=0$, and hence $\Phi(x)$ is constant.
In particular, $\Phi(x)=\Phi(0)$, i.e.,
\begin{multline}\label{S1E5}
2\phi''(x)\phi(x)-\phi'(x)^2+4(\lambda f'(u(x))+\mu)\phi(x)^2\\
=2\phi''(0)\phi(0)-\phi'(0)^2+4(\lambda f'(u(0))+\mu)\phi(0)^2.
\end{multline}
When $\Phi(0)=0$, then $\sqrt{\phi(x)}$ is a candidate of the eigenfunction, since (\ref{S1E4}) holds.
However, we show that a candidate of the eigenfunction can be constructed even if $\Phi(0)\neq 0$.

We study (\ref{S1E4+1}).
We look for a solution of the form $\phi(x)=h(u(x))$.
In other words we construct the eigenfunction, utilizing the shape of a general solution $u(x)$.
Substituting $h(u(x))$ into (\ref{S1E4+1}), we see that $h(u)$ satisfies the following key equation:
\begin{equation}\label{S1E5_5}
2(F(\alpha)-F(u))h'''-3f(u)h''+\left(3f'(u)+4\frac{\mu}{\lambda}\right)h'+2f''(u)h=0,
\end{equation}
where we use $\alpha:=u(0)$, $u'(0)=0$, $u'(x)^2=2\lambda(F(\alpha)-F(u(x)))$, and $u''=-\lambda f(u)$.
It is worth noting that the independent variable of (\ref{S1E5_5}) becomes $u$ and that the explicit $x$-dependence disappears.
Thus, (\ref{S1E5_5}) does not depend on each solution $u$ of (\ref{S1E2}).
One of the important results in \cite{WY08} is finding the equation (\ref{S1E5_5}).
Substituting $h(u(x))$ into (\ref{S1E5}), we have
\begin{equation}\label{S1E6}
(F(\alpha)-F(u))(2h''h-h'^2)-f(u)hh'+2\left(f(u)+\frac{\mu}{\lambda}\right)h^2=\rho,
\end{equation}
where
\begin{equation}\label{S1E6+1-}
\rho:=-f(\alpha)h(\alpha)h'(\alpha)+2\left(f'(\alpha)+\frac{\mu}{\lambda}\right)h(\alpha)^2.
\end{equation}

Hereafter, we construct an eigenfunction.
We assume that a solution $h(u)$ of (\ref{S1E5_5}) is obtained.
Then we look for the eigenfunction of the following form:
\begin{equation}\label{S1E6+1}
\varphi(x)=\sqrt{h(u(x))}W(\theta(x)).
\end{equation}
The functions $W(\theta)$ and $\theta(x)$ are defined later.
By (\ref{S1E6}) we have
\begin{multline}\label{S1E7}
\frac{d^2\sqrt{h(u(x))}}{dx^2}+\lambda f'(u)\sqrt{h(u(x))}+\mu \sqrt{h(u(x))}\\
=\frac{1}{4h\sqrt{h}}\left\{ 2\lambda(F(\alpha)-F(u))(2hh''-h'^2)-2\lambda f(u)hh'+4(\lambda f'(u)+\mu)h^2\right\}=\frac{2\lambda\rho}{4h\sqrt{h}}.
\end{multline}
Substituting (\ref{S1E6+1}) into (\ref{S1E3}), by (\ref{S1E7}) we have
\begin{align}
0&=
\frac{d^2\sqrt{h}}{dx^2}W+2\frac{d\sqrt{h}}{dx}W'\theta'+\sqrt{h}(W''\theta'^2+W'\theta'')+\lambda f'(u)\sqrt{h}W+\mu\sqrt{h}W\nonumber\\
&=\sqrt{h}\theta'^2\left( W''+\frac{2\frac{d\sqrt{h}}{dx}\theta'+\sqrt{h}\theta''}{\sqrt{h}\theta'^2}W'\right)+
\left(\frac{d^2\sqrt{h}}{dx^2}+\lambda f'(u)\sqrt{h}+\mu \sqrt{h}\right)W\nonumber\\
&=\sqrt{h}\theta'^2\left(W''+\frac{1}{h\theta'^2}\frac{d}{dx}(h\theta')W'+\frac{\lambda\rho}{2h^2\theta'^2}W\right).\label{S1E7+1}
\end{align}
If $h(u(x))\theta'(x)$ is constant $C$, then
\[
\theta(x)=\theta_0+C\int_0^x\frac{dy}{h(u(y))}\ \ \textrm{for some}\ \theta_0.
\]
Moreover, it follows from (\ref{S1E7+1}) that $W$ satisfies a second order linear ODE with constant coefficients, and hence the solution $W$ has an exact expression.
If (\ref{S1E5_5}) has an exact solution $h(u)$, then $\theta$ can be written explicitly.
Because of (\ref{S1E6+1}), the eigenfunction $\varphi(x)$ has an exact expression.
What we have to do is to find an exact solution $h(u)$ of (\ref{S1E5_5}).
When $f(u)=e^u$, by direct calculation we see that (\ref{S1E5_5}) has the exact solution
\begin{equation}\label{S1E8}
h(u)=\frac{2\mu}{\lambda}+e^{\alpha}-e^u.
\end{equation}

\section{The case $\mu>0$}
In Sections 3, 4, and 5 we consider the case $f(u)=e^u$.
\begin{lemma}\label{L1}
Let $\{\mu_j\}_{j=1}^{\infty}$, $\mu_1<\mu_2<\cdots$, be the eigenvalues of (\ref{EVP+}), and let $\varphi_j(x)$ be the eigenfunction corresponding to $\mu_j$.\\
(i) For $j\ge 2$, $\mu_j>0$ and $\mu_j$ is the unique solution of the problem (\ref{T1E1}).
Moreover, $\varphi_j(x)$, $j\ge 2$, can be written as (\ref{T2E1}).\\
(ii) If $0<\tau<\tau_1$, then $\mu_1$ is the unique solution of the problem (\ref{T1E1}), and $\varphi_1(x)$ can be written as (\ref{T2E1}).
\end{lemma}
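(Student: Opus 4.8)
The plan is to specialize the framework of Section~2 to $f(u)=e^u$ and carry it through explicitly, reading off both the eigenvalue equation (\ref{T1E1}) and the eigenfunction (\ref{T2E1}) from the exact solution (\ref{S1E8}) of the key ODE, and then to fix the spectral index by a zero-counting argument. First I would substitute $h(u)=\tfrac{2\mu}{\lambda}+e^{\alpha}-e^{u}$ together with the profile (\ref{S1E0}) to obtain, in the variable $\tau$, the closed form $h(u(x))=\cosh^2\tau\,(\tfrac{\mu}{\tau^2}+\tanh^2(\tau x))$. For $\mu>0$ this is strictly positive, so $\sqrt{h(u(x))}$ is smooth and nonvanishing and already reproduces the amplitude factor in (\ref{T2E1}). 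Next I would evaluate $\rho$ from (\ref{S1E6+1-}); since $h(\alpha)=2\mu/\lambda$ and $h'(\alpha)=-e^{\alpha}$, a direct computation gives $\rho=\tfrac{2\mu}{\lambda}(e^{\alpha}+\tfrac{2\mu}{\lambda})^2$, so the constant coefficient $k:=\lambda\rho/(2C^2)$ appearing in the reduced equation $W''+kW=0$ (which follows from (\ref{S1E7+1}) because $h\theta'\equiv C$) is positive.

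The heart of the computation is the identification $\sqrt{k}\,\theta(x)=\Theta(x)$ up to an additive constant, where $\Theta(x):=\sqrt{\mu}\,x+\arctan(\tfrac{\tau\tanh(\tau x)}{\sqrt{\mu}})$. I would check this by differentiating: from $h\theta'=C$ one has $\sqrt{k}\,\theta'(x)=\sqrt{k}\,C/h(u(x))$, while the identity $\tanh^2+\operatorname{sech}^2=1$ collapses $\Theta'(x)$ to $\sqrt{\mu}\,(\mu+\tau^2)/(\mu+\tau^2\tanh^2(\tau x))$; matching these against the value of $k$ just found makes them agree. Consequently the general solution of (\ref{S1E3}) for such $\mu>0$ is $\varphi(x)=\sqrt{h(u(x))}\,(c_1\cos\Theta(x)+c_2\sin\Theta(x))$, a two-parameter family that exhausts the solution space of the second-order problem.

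With this in hand I would impose the Dirichlet conditions. Because $\Theta$ is odd and increasing, $u$ is even, and $\sqrt{h}>0$, the pair $\varphi(\pm1)=0$ is a homogeneous linear system in $(c_1,c_2)$ whose determinant is a nonzero multiple of $\sin(2\Theta_1)$, where $\Theta_1:=\Theta(1)=\sqrt{\mu}+\arctan(\tfrac{\tau\tanh\tau}{\sqrt{\mu}})$; hence $\mu>0$ is an eigenvalue iff $\Theta_1=\tfrac{n\pi}{2}$ for some positive integer $n$. The identity $\arctan(\tfrac{\tau\tanh\tau}{\sqrt{\mu}})=\tfrac{\pi}{2}-\arctan(\tfrac{\sqrt{\mu}}{\tau\tanh\tau})$ turns $\Theta_1=\tfrac{n\pi}{2}$ into the equation in (\ref{T1E1}), while $\arctan(\cdot)\in(0,\tfrac{\pi}{2})$ gives $\sqrt{\mu}\in(\tfrac{\pi}{2}(n-1),\tfrac{\pi}{2}n)$, i.e.\ the range bound of (\ref{T1E1}) with $j=n$. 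At such $\mu$ the system forces $c_1=0$ if $n$ is even and $c_2=0$ if $n$ is odd, so the surviving sinusoid, written with the phase $\tfrac{\pi}{2}j$, is exactly (\ref{T2E1}), its parity being consistent with Proposition~\ref{P1}~(iii).

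It remains to settle existence, uniqueness, and the correct index, which I expect to be the main obstacle. I would analyse $s\mapsto\Theta_1(s)=s+\arctan(a/s)$ with $a:=\tau\tanh\tau$ and $s=\sqrt{\mu}$: here $\Theta_1(0^{+})=\tfrac{\pi}{2}$, $\Theta_1(\infty)=\infty$, and $\Theta_1'(s)=1-a/(s^2+a^2)$. Since $\tau\tanh\tau$ is increasing, $a\ge1$ is equivalent to $\tau\ge\tau_1$, where $\tau_1$ solves (\ref{S1E1+1}). When $a\ge1$ the map is increasing, so $\Theta_1=\tfrac{n\pi}{2}$ has a unique root for every $n\ge2$ and none for $n=1$; when $a<1$ it first dips below $\tfrac{\pi}{2}$ and then rises to $\infty$, giving again a unique root for each $n\ge2$ and now also a unique root for $n=1$ in $(0,\tfrac{\pi}{2})$, the uniqueness within each window coming from the convexity of $\tan$ against the line $s/a$. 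Finally, because $\sqrt{h}>0$, the interior zeros of $\varphi$ are those of $\sin(\Theta(x)+\mathrm{const})$; as $\Theta$ sweeps $[-\tfrac{n\pi}{2},\tfrac{n\pi}{2}]$ with both endpoints being zeros, there are exactly $n-1$ of them, so Proposition~\ref{P1}~(ii) identifies this eigenvalue as $\mu_n$, i.e.\ $n=j$. Thus the unique root with $\Theta_1=\tfrac{\pi}{2}j$ is $\mu_j$, positive for every $j\ge2$ (this proves (i)) and, when $0<\tau<\tau_1$, positive also for $j=1$ (this proves (ii)).
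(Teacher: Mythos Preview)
Your proof is correct and follows essentially the same route as the paper: you specialize the Section~2 framework with the explicit solution (\ref{S1E8}), compute $h(u(x))$, $\rho$, and the phase $\Theta(x)=\sqrt{\mu}\,x+\arctan(\tau\tanh(\tau x)/\sqrt{\mu})$, impose the Dirichlet condition to get $\Theta(1)=\tfrac{\pi}{2}j$, and identify the index by zero counting via Proposition~\ref{P1}~(ii). The only differences are organizational---you impose the boundary conditions through the determinant $\sin(2\Theta_1)$ rather than splitting at the outset into odd/even candidates as the paper does, and you give a slightly fuller monotonicity analysis of $s\mapsto s+\arctan(a/s)$ for the existence/uniqueness step---but these are presentational choices, not a different method.
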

\begin{proof}
Let $f(u)=e^u$ and let $\tau>0$.
We consider the case $\mu>0$.
The function (\ref{S1E8}) is an exact solution of (\ref{S1E5_5}).
Let $a:=\sqrt{\mu}/\tau$.
Substituting (\ref{tau+}) and (\ref{S1E0}) into (\ref{S1E8}), we have
\begin{equation}\label{S1E7-3}
h(u(x))=\cosh^2(\tau)\left(a^2+\tanh^2(\tau x)\right).
\end{equation}
We see that $h(u(x))>0$ for $-1\le x\le 1$.

Now we determine $\theta$ such that
\begin{equation}\label{S1E7-2}
\frac{dh(u(x))\theta'(x)}{dx}=0\ \ \textrm{and}\ \ 
\theta'(0)=\sqrt{\frac{\lambda\rho}{2}}\frac{1}{h(u(0))}.
\end{equation}
Then,
\[
\theta(x)=\sqrt{\frac{\lambda\rho}{2}}\int_0^x\frac{dy}{h(u(y))}
\]
is one solution of (\ref{S1E7-2}).
Because of (\ref{S1E7+1}), $W$ satisfies $W''+W=0$.
Since $W(\theta)=\cos (\theta+\theta_1)$, it follows from (\ref{S1E6+1}) that the eigenfunction can be written as
\begin{equation}\label{S1E7-1}
\varphi(x)=\sqrt{h(u(x))}\cos\left(\sqrt{\frac{\lambda\rho}{2}}\int_0^x\frac{dy}{h(u(y))}+\theta_1\right).
\end{equation}
Substituting (\ref{S1E0}) and (\ref{S1E8}) into (\ref{S1E6+1-}), we have
\begin{equation}\label{S1E7-1+0}
\rho=a^2\left(a^2+1\right)^2\cosh^6\tau.
\end{equation}
Since $\mu>0$, we see that $\rho>0$.
By (\ref{S1E0}) and (\ref{S1E7-1+0}) we have
\begin{equation}\label{S1E7-1+1}
\sqrt{\frac{\lambda\rho}{2}}=a\left(a^2+1\right)\tau\cosh^2\tau.
\end{equation}
Using (\ref{S1E7-3}), (\ref{S1E7-1+1}), and the identity
\[
\int_0^x\frac{dy}{a^2+\tanh^2(\tau y)}=\frac{1}{a(a^2+1)\tau}\left(a\tau x+\arctan\left(\frac{\tanh(\tau x)}{a}\right)\right),
\]
we have
\begin{align}
\sqrt{\frac{\lambda\rho}{2}}\int_0^x\frac{dy}{h(u(y))}
&=\sqrt{\frac{\lambda\rho}{2}}\frac{1}{\cosh^2\tau}\int_0^x\frac{dy}{a^2+\tanh^2(\tau y)}\nonumber\\
&=\sqrt{\mu}x+\arctan\left(\frac{\tau\tanh(\tau x)}{\sqrt{\mu}}\right).\label{S1E7+0}
\end{align}
Substituting (\ref{S1E7-3}) and (\ref{S1E7+0}) into (\ref{S1E7-1}), we have
\[
\varphi(x)=\cosh(\tau)\sqrt{\frac{\mu}{\tau^2}+\tanh^2(\tau x)}
\cos\left(\sqrt{\mu}x+\arctan\left(\frac{\tau\tanh(\tau x)}{\sqrt{\mu}}\right)+\theta_1\right).
\]
Odd and even eigenfunctions can be written as
\begin{align*}
\varphi^{o}(x)&=\cosh(\tau)\sqrt{\frac{\mu}{\tau^2}+\tanh^2(\tau x)}
\sin\left(\sqrt{\mu}x+\arctan\left(\frac{\tau\tanh(\tau x)}{\sqrt{\mu}}\right)\right) \ \ \textrm{and}\\
\varphi^{e}(x)&=\cosh(\tau)\sqrt{\frac{\mu}{\tau^2}+\tanh^2(\tau x)}
\cos\left(\sqrt{\mu}x+\arctan\left(\frac{\tau\tanh(\tau x)}{\sqrt{\mu}}\right)\right),
\end{align*}
respectively.
First, we consider $\varphi_j(x)$ in the case where $j\ge 2$ is even, i.e., $j=2k$.
Then, it follows from Proposition~\ref{P1}~(iii) that the eigenfunction is odd, and hence, $\varphi^o(x)$ is a candidate.
Since the Dirichlet boundary condition is satisfied, $\varphi^o(\pm 1)=0$, i.e.,
\[
\sqrt{\mu}+\arctan\left(\frac{\tau\tanh\tau}{\sqrt{\mu}}\right)=\pi n\ \textrm{for some}\ n\ge 1.
\]
When $\mu$ satisfies the above equation, by the definition of $\varphi^o(x)$ we see that $\sharp\{x\in(-1,1)|\ \varphi^o(x)=0\}=2n-1$.
It follows from Proposition~\ref{P1}~(ii) that the zero number satisfies $2n-1=j-1$, i.e., $n=j/2$.
We have
\[
\sqrt{\mu}+\arctan\left(\frac{\tau\tanh\tau}{\sqrt{\mu}}\right)=\frac{\pi}{2}j.
\]
Therefore,
\[
\tan\left(\frac{\pi}{2}j-\sqrt{\mu}\right)=\frac{\tau\tanh\tau}{\sqrt{\mu}}\ \ \textrm{and}\ \ \frac{\pi}{2}(j-1)<\sqrt{\mu}<\frac{\pi}{2}j.
\]
This is equivalent to (\ref{T1E1}).
The equation (\ref{T1E1}) has a unique solution $\mu_j$ for $\tau>0$, which is the $j$-th eigenvalue.
Consequently, $\varphi_j(x)=\varphi^o(x)/\cosh(\tau)$ with $\mu=\mu_j$.

Second, we consider $\varphi_j(x)$ in the case where $j\ge 1$ is odd, i.e., $j=2k-1$.
By the same argument we obtain the same equation (\ref{T1E1}) even if $j$ is odd.
When $j\in\{3,5,7,\ldots\}$, we see that (\ref{T1E1}) has a unique solution $\mu_j$ for $\tau>0$, which is the $j$-th eigenvalue.
When $j=1$, we see that (\ref{T1E1}) has a unique solution $\mu_1$ provided that $0<\tau<\tau_1$.

When $j\in\{2,3,4,\ldots\}$, $\varphi^o(x)$ and $\varphi^e(x)$ can be summarized as (\ref{T2E1}) up to multiple constant and $\mu_j$ is given by (\ref{T1E1}).
The assertion (i) holds.
If $0<\tau<\tau_1$, then $\varphi_1(x)$ and $\mu_1$ are given by (\ref{T2E1}) and (\ref{T1E1}), respectively.
The assertion (ii) holds.
The proof is complete.
\end{proof}
In Lemma~\ref{L1} we have obtained $\{(\mu_j,\varphi_j(x))|\ j\ge 2\}$ for all $\tau>0$ and $(\mu_1,\varphi_1(x))$ for $0<\tau<\tau_1$.
In Sections 4 and 5 we study $(\mu_1,\varphi_1(x))$ for $\tau\ge\tau_1$.

\section{The case $\mu=0$}
We study $(\mu_1,\varphi_1(x))$ in the case $\tau=\tau_1$.
\begin{lemma}\label{L2}
Let $\mu_1$ be the first eigenvalue of (\ref{EVP+}), and let $\varphi_1(x)$ be the first eigenfunction.
If $\tau=\tau_1$, then $\mu_1=0$ and $\varphi_1(x)$ is given by (\ref{T2E2}).
\end{lemma}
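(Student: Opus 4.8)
The plan is to exploit the fact that $\tau=\tau_1$ is precisely the turning point of the solution curve $\calC$, where $\lambda'(\tau_1)=0$ by (\ref{S1E1+2})--(\ref{S1E1+1}). Setting $\mu=0$, the eigenvalue problem (\ref{EVP+}) reads $\varphi''+\lambda(\tau_1)e^{u}\varphi=0$ with $\varphi(\pm1)=0$, and I would exhibit an explicit nontrivial solution by differentiating the solution family with respect to the parameter $\tau$, rather than by specializing the $\mu>0$ formula from Lemma~\ref{L1}.

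Concretely, every member of $\calC$ satisfies the identity $u_{xx}(x,\tau)+\lambda(\tau)e^{u(x,\tau)}=0$ on $(-1,1)$. Differentiating in $\tau$ gives
\[
(u_\tau)_{xx}+\lambda'(\tau)e^{u}+\lambda(\tau)e^{u}u_\tau=0 .
\]
At $\tau=\tau_1$ the middle term drops out because $\lambda'(\tau_1)=0$, so $\psi:=u_\tau(\cdot,\tau_1)$ solves $\psi''+\lambda(\tau_1)e^{u}\psi=0$. Differentiating the boundary relation $u(\pm1,\tau)=0$ in $\tau$ yields $\psi(\pm1)=0$, so $\psi$ meets the homogeneous Dirichlet conditions. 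Hence $\mu=0$ is an eigenvalue of (\ref{EVP+}) at $\tau=\tau_1$ with eigenfunction $\psi$, provided $\psi\not\equiv0$.

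Next I would compute $\psi$ explicitly. From (\ref{S1E0}) we have $u(x,\tau)=2\log\cosh\tau-2\log\cosh(\tau x)$, whence $u_\tau=2\tanh\tau-2x\tanh(\tau x)$ and therefore
\[
\psi(x)=u_\tau(x,\tau_1)=2\bigl(\tanh\tau_1-x\tanh(\tau_1 x)\bigr),
\]
which is exactly (\ref{T2E2}) up to the harmless scalar factor $2$; in particular $\psi(0)=2\tanh\tau_1>0$, so $\psi\not\equiv0$. As an independent cross-check one may substitute (\ref{T2E2}) directly into $\varphi''+\lambda(\tau_1)e^{u}\varphi$ and verify that it vanishes, the cancellation hinging precisely on the defining relation $\tau_1\tanh\tau_1=1$ from (\ref{S1E1+1}).

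Finally, to see that $\mu=0$ is the \emph{first} eigenvalue rather than a higher one, I would count interior zeros and invoke Proposition~\ref{P1}. The function $g(x):=x\tanh(\tau_1 x)$ is even, vanishes at $x=0$, is strictly increasing on $[0,1]$, and equals $\tanh\tau_1$ at $x=1$; hence $\tanh\tau_1-g(x)>0$ on $[0,1)$ and $=0$ at $x=1$. By evenness, (\ref{T2E2}) is strictly positive on $(-1,1)$ and vanishes only at $x=\pm1$, so its zero number in $(-1,1)$ is $0$. By Proposition~\ref{P1}~(ii) an eigenfunction with no interior zeros is the first eigenfunction, so $\mu=0=\mu_1$ and $\varphi_1$ is given by (\ref{T2E2}). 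I expect the only genuinely delicate point to be this last identification of $\mu=0$ as the principal eigenvalue: the general expression (\ref{T2E1}) degenerates at $\mu=0$, where $h(u(0))=0$ and $\rho=0$, so the ordering cannot simply be inherited by continuity from the $\mu>0$ branch and must instead be secured by the zero-count above (which simultaneously rules out any negative eigenvalue at $\tau=\tau_1$).
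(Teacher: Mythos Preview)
Your proof is correct and follows essentially the same approach as the paper: differentiate the solution family in $\tau$, use $\lambda'(\tau_1)=0$ to kill the inhomogeneous term, and identify $\mu_1=0$ via positivity of $u_\tau(x,\tau_1)$. The paper simply asserts the positivity (``We easily see that $u_\tau(x,\tau)>0$ for $-1<x<1$''), whereas you supply the monotonicity argument for $x\tanh(\tau_1 x)$; otherwise the arguments coincide.
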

\begin{proof}
Let $(\lambda(\tau),u(x,\tau))$ be a solution of (\ref{GP+}) given by (\ref{S1E0}).
Differentiating $u$ with respect to $\tau$, we have
\[
u_{\tau}(x,\tau)=2(\tanh\tau-x\tanh(\tau x)).
\]
Note that $u_{\tau}(\pm 1,\tau)=0$.
On the other hand, differentiating $u''+\lambda e^u=0$ with respect to $\tau$, we have
\[
u_{\tau}''+\lambda e^uu_{\tau}=-\lambda'e^u,
\]
where $\lambda'(\tau)$ is given by (\ref{S1E1+2}).
Then, $\tau_1$ is the unique positive solution of $\lambda'(\tau)=0$, where $\tau_1$ is a solution of (\ref{S1E1+1}).
When $\tau=\tau_1$, we see that $u_{\tau}(x,\tau)$ satisfies (\ref{EVP+}) with $\mu=0$.
We easily see that $u_{\tau}(x,\tau)>0$ for $-1<x<1$.
Hence, $0$ is the first eigenvalue and $u_{\tau}(x,\tau_1)/2$, which is (\ref{T2E2}), is the first eigenfunction.
The proof is complete.
\end{proof}

\section{The case $\mu<0$}
We study $(\mu_1,\varphi_1(x))$ in the case $\tau>\tau_1$.
\begin{lemma}\label{L3}
Let $\mu_1$ be the first eigenvalue of (\ref{EVP+}), and let $\varphi_1(x)$ be the first eigenfunction.
If $\tau>\tau_1$, then $\mu_1<0$, $\mu_1$ is the unique negative solution of (\ref{T1E2}), and $\varphi_1(x)$ is given by (\ref{T2E3}).
\end{lemma}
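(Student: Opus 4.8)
The plan is to follow the construction of Section~2 just far enough to locate the correct closed form, and then to verify that form directly, because the machinery $\varphi=\sqrt{h(u(x))}\,W(\theta(x))$ degenerates once $\mu<0$. First I would specialize (\ref{S1E8}) and (\ref{S1E6+1-}) to the case $\mu<0$. Writing $s:=\sqrt{-\mu}$ and inserting (\ref{S1E0}) one finds
\[
h(u(x))=\cosh^2\tau\left(\tanh^2(\tau x)-\frac{s^2}{\tau^2}\right),\qquad
\rho=\frac{\mu(\tau^2+\mu)^2}{\tau^6}\cosh^6\tau<0 .
\]
Because $\rho<0$, the constant-coefficient equation for $W$ produced by (\ref{S1E7+1}) is $W''-W=0$ rather than $W''+W=0$, so the eigenfunction should be built from $\cosh$ and $\sinh$ instead of trigonometric functions. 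This already points to the ansatz (\ref{T2E3}).

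The hard part, and the reason I would not run the construction verbatim, is that $h(u(x))$ now \emph{changes sign} on $[-1,1]$: any solution $s$ of (\ref{T1E2}) satisfies $\tanh s<1$, hence $s<\tau\tanh\tau$ and $s/\tau<\tanh\tau$, so $h(u(x))<0$ near $x=0$ but $h(u(x))>0$ near $x=\pm1$. Thus $\sqrt{h(u(x))}$ and $\theta(x)=\int_0^x C/h(u(y))\,dy$ are not globally real and the formal expression cannot be read off directly. I would therefore bypass it: take (\ref{T2E3}), namely $\varphi_1(x)=s\cosh(sx)-\tau\sinh(sx)\tanh(\tau x)$, as the candidate and verify it by hand. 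Using $\lambda(\tau)e^{u(x,\tau)}=2\tau^2/\cosh^2(\tau x)$ from (\ref{S1E0}), a direct differentiation shows $\varphi_1''+\left(2\tau^2/\cosh^2(\tau x)-s^2\right)\varphi_1=0$ for every $s>0$; after using $\frac{d}{dx}\tanh(\tau x)=\tau/\cosh^2(\tau x)$ the $\cosh(sx)$ and $\sinh(sx)$ coefficients each cancel. Since $\varphi_1$ is even, the boundary conditions reduce to $\varphi_1(1)=0$, i.e. $s\cosh s=\tau\sinh s\tanh\tau$, which rearranges exactly into (\ref{T1E2}).

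It then remains to show that (\ref{T1E2}) has a unique positive root precisely when $\tau>\tau_1$, and that the resulting $\mu_1=-s^2$ is genuinely the first eigenvalue. For the first point I would study $F(s):=\tanh s-s/(\tau\tanh\tau)$ on $(0,\infty)$: here $F(0)=0$, the derivative $F'(s)=1/\cosh^2 s-1/(\tau\tanh\tau)$ is strictly decreasing, and $F(s)\to-\infty$. Hence $F$ is unimodal, and it admits a positive root iff $F'(0)=1-1/(\tau\tanh\tau)>0$, i.e. iff $\tau\tanh\tau>1$; by (\ref{S1E1+1}) and the monotonicity of $\tau\tanh\tau$ this is exactly $\tau>\tau_1$, and the root is then unique. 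For the second point I would invoke Proposition~\ref{P1}(ii): it suffices to check that $\varphi_1$ has no interior zero. On $(0,1)$ the inequality $\varphi_1(x)>0$ is equivalent to $s\coth(sx)>\tau\tanh(\tau x)$; the left side is decreasing and the right side increasing in $x$, and they coincide at $x=1$ by (\ref{T1E2}), so the strict inequality holds on $(0,1)$. By evenness $\varphi_1>0$ on $(-1,1)$, its zero number is $0$, and therefore $\mu_1=-s^2<0$ is indeed the first eigenvalue with first eigenfunction (\ref{T2E3}).

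I expect the third step to be the only part needing real care: the existence/uniqueness dichotomy at $\tau_1$ and the Sturm-type zero-count identification of $-s^2$ as the first eigenvalue. The ODE check in the second step, once the candidate (\ref{T2E3}) is written down, is mechanical, and the sign change of $h$ is a distraction that is sidestepped entirely by verifying the closed form rather than deriving it through $\sqrt{h}$.
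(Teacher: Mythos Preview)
Your argument is correct, and it differs from the paper's in one structural respect: the choice of $h$. The paper does not reuse (\ref{S1E8}); instead it introduces the \emph{new} solution $h(u)=2\bar\mu/\lambda-e^{\alpha}+e^{u}$ of (\ref{S1E5_5}) (the negative of yours, which is again a solution by linearity). With this sign one has $h(u(x))=\cosh^2\tau\,(\bar a^{2}-\tanh^{2}(\tau x))$, which is positive near $x=0$, so the machinery $\varphi=\sqrt{h}\,W(\theta)$ can be run legitimately on the sub-interval $I=(-\mathrm{artanh}(\bar a)/\tau,\mathrm{artanh}(\bar a)/\tau)$; there $W''-W=0$, one takes $W=\cosh$, and the product $\sqrt{h(u(x))}\,W(\theta(x))$ simplifies algebraically to the closed form (\ref{T2E3}). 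Only then does the paper observe, by direct substitution, that the resulting $\varphi$ is defined and satisfies the ODE on all of $[-1,1]$ even though $I$ may be strictly smaller. So the paper \emph{derives} (\ref{T2E3}) from the general theory on a sub-domain and then extends by verification, whereas you \emph{guess} (\ref{T2E3}) from the heuristic $W''-W=0$ and verify it globally from the start. Both routes end with the same direct ODE check; yours is shorter and sidesteps the domain issue entirely, while the paper's makes clearer where the formula originates. Your uniqueness analysis of (\ref{T1E2}) via the unimodal function $F(s)$ and your positivity argument via the monotone pair $s\coth(sx)$ and $\tau\tanh(\tau x)$ are more explicit than the paper's ``we easily see'' and are perfectly sound.
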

\begin{proof}
We consider the case $\mu<0$.
Let $\bmu:=-\mu(>0)$ and $\ba:=\sqrt{\bmu}/\tau(>0)$.
We see that
\begin{equation}\label{S3E1-2}
h(u):=\frac{2\bmu}{\lambda}-e^{\alpha}+e^u
\end{equation}
is an exact solution of (\ref{S1E5_5}).
Note that the solution (\ref{S3E1-2}) is not the same as (\ref{S1E8}).
By (\ref{S1E0}) and (\ref{S3E1-2}) we see that
\begin{equation}\label{S3E1-2+}
h(u(x))=\cosh^2(\tau)\left(\ba^2-\tanh^2(\tau x)\right)
\end{equation}
and $h(u(0))=2\bmu/\lambda>0$.
We define $\varphi(x):=\sqrt{h(u(x))}W(\theta(x))$.
Note that $W(\theta)$ and $\theta(x)$ are not the same functions as in Section 3 and that they are defined later.
Hereafter, we work on the interval $I:=(-{\rm artanh}(\ba)/\tau,{\rm artanh}(\ba)/\tau)$, because $h(u(x))>0$ for $x\in I$.
Here, $w={\rm artanh}(z)$ denotes the inverse function of $z=\tanh(w)$ and ${\rm artanh}(z):=\infty$ if $z\ge 1$.
Note that the interval $I$ may not include $[-1,1]$.
By (\ref{S1E6+1-}) we have
\begin{equation}\label{S3E1-1}
\rho=-\ba^2(-\ba^2+1)^2\cosh^6\tau.
\end{equation}
Let $\brho:=-\rho(\ge 0)$.
We define $\theta(x)$ such that
\begin{equation}\label{S3E1}
\frac{dh(u(x))\theta'(x)}{dx}=0\ \ \textrm{and}\ \ \theta'(0)=\sqrt{\frac{\lambda\brho}{2}}\frac{1}{h(u(0))}.
\end{equation}
We see that
\begin{equation}\label{S3E1+}
\theta(x)=\sqrt{\frac{\lambda\brho}{2}}\int_0^x\frac{dy}{h(u(y))}
\end{equation}
is one solution of (\ref{S3E1}).
By (\ref{S1E7+1}) we see that $W(\theta)$ satisfies
\begin{equation}\label{S3E2}
W''-W=0.
\end{equation}
We consider the case $\ba=1$.
Then it follows from (\ref{S3E1-1}) that $\rho=0$.
We see by (\ref{S1E7}) that the first eigenfunction should be $\sqrt{h(u(x))}$.
However, by (\ref{S3E1-2+}) we see that $\sqrt{h(u(\pm 1))}\neq 0$, and hence the Dirichlet boundary condition is not satisfied.
The case $\bar{a}=1$ does not occur.

We consider the case $\bar{a}\neq 1$.
Using (\ref{S3E1-2+}) and the identities
\begin{align*}
\sqrt{\frac{\lambda\brho}{2}}&=\frac{1}{\sqrt{2}}\frac{\sqrt{2}\tau}{\cosh\tau}\ba|-\ba^2+1|\cosh^3\tau\ \ \textrm{and}\\
\int_0^x\frac{dy}{\ba^2-\tanh^2(\tau y)}
&=\frac{1}{\ba(\ba^2-1)\tau}\left(\ba\tau x-\frac{1}{2}\log\frac{\ba+\tanh(\tau x)}{\ba-\tanh(\tau x)}\right),
\end{align*}
we have
\begin{align}
\sqrt{\frac{\lambda\brho}{2}}\int_0^x\frac{dy}{h(u(y))}
&=\sqrt{\frac{\lambda\brho}{2}}\frac{1}{\cosh^2\tau}\int_0^x\frac{dy}{\ba^2-\tanh^2(\tau y)}\nonumber\\
&=
\begin{cases}
\bar{a}\tau x-\frac{1}{2}\log\frac{\bar{a}+\tanh(\tau x)}{\bar{a}-\tanh(\tau x)} & \textrm{if}\ \bar{a}>1,\\
-\bar{a}\tau x+\frac{1}{2}\log\frac{\bar{a}+\tanh(\tau x)}{\bar{a}-\tanh(\tau x)} & \textrm{if}\ 0<\bar{a}<1.\\
\end{cases}\label{S3E2+}
\end{align}
Note that the above function is well-defined in $I$.
On the other hand, $W(\theta)=\cosh(\theta)$ is a solution of (\ref{S3E2}) which is even.
Hence, by (\ref{S3E1+}) and (\ref{S3E2+}) we have
\begin{equation}\label{S3E2++}
W(\theta(x))=\frac{1}{2}\left( e^{\sqrt{\bmu}x}\sqrt{\frac{\ba-\tanh(\tau x)}{\ba+\tanh(\tau x)}}+e^{-\sqrt{\bmu}x}\sqrt{\frac{\ba+\tanh(\tau x)}{\ba-\tanh(\tau x)}}\right)
\end{equation}
in both case $0<\ba<1$ and case $\ba>1$.
By (\ref{S3E1-2+}) and (\ref{S3E2++}) we have
\begin{align*}
\varphi(x)&=\sqrt{h(u(x))}W(\theta(x))\\
&=\cosh(\tau)\left(\ba\cosh(\sqrt{\bmu}x)-\sinh(\sqrt{\bmu}x)\tanh(\tau x)\right).
\end{align*}
By direct calculation we see that $\varphi(x)$ is defined for all $x\in [-1,1]$ and $\varphi''+\lambda e^u\varphi-\bmu\varphi=0$ for all $x\in(-1,1)$, although $I$ may not include $[-1,1]$.
Thus, $\varphi(x)$ becomes an eigenfunction provided that $\varphi(x)$ satisfies the Dirichlet boundary condition.
We study the problem $\varphi(\pm 1)=0$.
Then, $\sqrt{\bmu}\cosh(\sqrt{\bmu})-\tau\sinh(\sqrt{\bmu})\tanh(\tau)=0$.
We have
\begin{equation}\label{S3E3}
\tanh\sqrt{\bmu}=\frac{\sqrt{\bmu}}{\tau\tanh\tau}.
\end{equation}
We easily see that if $\tau>\tau_1$, then $\tau\tanh\tau>1$, and hence (\ref{S3E3}) has a unique positive solution $\bmu_1$.
Moreover, we easily see that $\varphi(x)>0$ for $-1<x<1$.
Let $\mu_1:=-\bmu_1$.
When $\tau>\tau_1$, $\mu_1$ is the first eigenvalue of (\ref{EVP+}).
Thus, $\mu_1$ is the unique negative solution of (\ref{T1E2}).
When $\mu=\mu_1$, the function $\tau\varphi(x)/\cosh\tau$, which is (\ref{T2E3}), is the first eigenfunction.
The proof is complete.
\end{proof}
In the above proof we can show that the case $0<\ba\le 1$ does not occur.
Thus, the interval $I$ includes $[-1,1]$, and hence (\ref{S3E2+}) and (\ref{S3E2++}) are well-defined in $[-1,1]$.\\

\noindent
{\sc Proof of Theorems~\ref{T1} and \ref{T2}.}
Theorems~\ref{T1}~(i) and \ref{T2}~(i) follow from Lemma~\ref{L1}.
Theorems~\ref{T1}~(ii) and \ref{T2}~(ii) follow from Lemmas~\ref{L1}~(i) and \ref{L2}.
Theorems~\ref{T1}~(iii) and \ref{T2}~(iii) follow from Lemmas~\ref{L1}~(i) and \ref{L3}.
\qed
\\

\noindent
{\sc Proof of Corollary~\ref{C1}.}
(i) By (\ref{S1E0}) we see that $\lambda(\tau)e^{u(\tau)}\to 0$ in $C([-1,1])$ as $\tau\downarrow 0$.
Hence, an eigenvalue of (\ref{EVP+}) converges to an eigenvalue of the problem
\[
\begin{cases}
\varphi''=-\mu\varphi, & -1<x<1,\\
\varphi(-1)=\varphi(1)=0.
\end{cases}
\]
Thus, (\ref{C1E1}) holds, and the assertion (i) holds.\\
(ii) When $j\ge 2$, by (\ref{T1E1}) we see that
\[
\frac{\pi}{2}(j-1)\le\sqrt{\mu_j}\le\frac{\pi}{2}(j-1)+\arctan\left(\frac{\pi j}{2\tau\tanh\tau}\right).
\]
Since $\arctan({\pi j}/{(2\tau\tanh\tau)})\to 0$ ($\tau\to\infty$), (\ref{C1E2}) holds for $j\ge 2$.
We consider the case $j=1$.
Let $y(x):=(\tanh x)/x$, $x>0$.
Then we easily see that $\lim_{x\downarrow 0}y(x)=1$, $\lim_{x\to\infty}y(x)=0$, and $y'(x)<0$ for $x>0$.
Thus, if $\tau>\tau_1$, then the solution of $y(x)=1/(\tau\tanh\tau)$ is unique and it diverges as $\tau\to\infty$.
Thus, we see by (\ref{T1E2}) that (\ref{C1E2}) holds for $j=1$.
The assertion (ii) holds.
\qed
\bigskip

\noindent
{\sc Proof of Corollary~\ref{C2}.}
(i) It follows from Corollary~\ref{C1}~(ii) that $\sqrt{-\mu_1}\to\infty$ as $\tau\to\infty$.
We see by (\ref{T1E2}) that
\[
\frac{\sqrt{-\mu_1}}{\tau}=\tanh(\sqrt{-\mu_1})\tanh(\tau)\to 1\ \ \textrm{as}\ \ \tau\to\infty.
\]
As $\tau\to\infty$,
\begin{align*}
\frac{1}{\tau}\varphi_1\left(\frac{y}{\tau}\right)
&=\frac{\sqrt{-\mu_1}}{\tau}\cosh\left(\frac{\sqrt{-\mu_1}}{\tau}y\right)-\sinh\left(\frac{\sqrt{-\mu_1}}{\tau}y\right)\tanh y\\
&\to\cosh y-\sinh y\tanh y=\frac{1}{\cosh y}\ \ \textrm{in}\ \ C_{loc}(\R).
\end{align*}
(ii) As $\tau\to\infty$, $\tanh (\tau x)\to {\rm sign}(x)$ in $C_{loc}([-1,0)\cup(0,1])$, where
\[
{\rm sign}(x):=
\begin{cases}
1 & x>0,\\
0 & x=0,\\
-1 & x<0.
\end{cases}
\]
Thus, the conclusion follows from (\ref{T2E1}) and (\ref{C1E2}).
\qed

\section{The problem $\Delta u+\lambda e^{-u}=0$}
In this section we study (\ref{GP-}) and the linearized eigenvalue problem (\ref{EVP-}).
The set of the positive solutions of (\ref{GP-}) becomes a curve and it can be written explicitly as
\[
(\lambda,u)=\left(2e^{\alpha}\arctan^2\sqrt{e^{\alpha}-1},\alpha-\log\left(\tan^2\left(\left(\arctan\sqrt{e^{\alpha}-1}\right)x\right)+1\right)\right),
\]
where $\alpha=\left\|u\right\|_{L^{\infty}([-1,1])}$.
Let $\tau(\alpha):=\arctan\sqrt{e^{\alpha}-1}$.
Then,
\begin{equation}\label{tau-}
\alpha=-2\log\cos\tau,
\end{equation}
$\tau(0)=0$, $\tau(\infty)=\pi/2$, and $\tau$ is a homeomorphism from $[0,\infty)$ to $[0,\pi/2)$.
We use $\tau$ ($0\le\tau<\pi/2$) as an independent variable instead of $\alpha$.
We have
\begin{equation}\label{S6E1}
(\lambda(\tau),u(x,\tau))=\left(\frac{2\tau^2}{\cos^2\tau},2\log\frac{\cos(\tau x)}{\cos\tau}\right).
\end{equation}
The curve has no turning point, since $\lambda'(\tau)=4\tau(1+\tau\tan\tau)/(\cos^2\tau)>0$ for $0<\tau<\pi/2$.
We consider (\ref{EVP-}) which is the linearized eigenvalue problem of (\ref{GP-}) at the solution (\ref{S6E1}).
\begin{theorem}\label{T3}
Let $0<\tau<\pi/2$, and let $\{\mu_j\}_{j=1}^{\infty}$, $\mu_1<\mu_2<\cdots$, be the eigenvalues of (\ref{EVP-}).
Then the following hold:\\
(i) For $j\ge 1$, $\mu_j>0$ and $\mu_j$ is given by the unique solution of the problem
\begin{equation}\label{T3E1}
\tan\left(\frac{\pi}{2}(j+1)-\sqrt{\mu_j}\right)=\frac{\sqrt{\mu_j}}{\tau\tan\tau}\ \ \textrm{and}\ \ \frac{\pi}{2}j<\sqrt{\mu_j}<\frac{\pi}{2}(j+1).
\end{equation}
(ii) The eigenfunction $\varphi_j(x)$ corresponding to $\mu_j$ is given by
\begin{equation}\label{T3E2}
\varphi_j(x)=\sqrt{\frac{\mu_j}{\tau^2}+\tan^2(\tau x)}\sin\left(\sqrt{\mu_j}x-\arctan\left(\frac{\tau\tan(\tau x)}{\sqrt{\mu_j}}\right)+\frac{\pi}{2}j\right).
\end{equation}
(iii) For $j\ge 1$,
\[
\lim_{\tau\downarrow 0}\sqrt{\mu_j}=\frac{\pi}{2}j\ \ \textrm{and}\ \ \lim_{\tau\uparrow\pi/2}\sqrt{\mu_j}=\frac{\pi}{2}(j+1).
\]
(iv) Let $\varphi_j(x)$ be the eigenfunction given by (\ref{T3E2}).
For $j\ge 1$, as $\tau\uparrow\pi/2$,
\[
\varphi_j(x)\to\bar{\varphi}_j(x)\ \ \textrm{in}\ \ C_{loc}((-1,1)),
\]
where
\[
\bar{\varphi}_j(x)=\sqrt{(j+1)^2+\tan^2\left(\frac{\pi}{2}x\right)}
\sin\left(\frac{\pi}{2}(j+1)x-\arctan\left(\frac{\tan\left(\frac{\pi}{2}x\right)}{j+1}\right)+\frac{\pi}{2}j\right).
\]
\begin{proof}
We briefly prove Theorem~\ref{T3}.
The proof is almost the same as the case (\ref{GP+}).

Let $f(u)=e^{-u}$ and let $0<\tau<\pi/2$.
We consider the case $\mu>0$.
The function
\begin{equation}\label{T3PE1}
h(u)=\frac{2\mu}{\lambda}-e^{-\alpha}+e^{-u}
\end{equation}
is an exact solution of (\ref{S1E5_5}).
Let $a:=\sqrt{\mu}/\tau$.
Substituting (\ref{tau-}) and (\ref{S6E1}) into (\ref{T3PE1}), we have
\[
h(u(x))=\cos^2\tau\left(a^2+\tan^2(\tau x)\right).
\]
We see that $h(u(x))>0$ for $-1\le x\le 1$.
Substituting (\ref{tau-}) and (\ref{T3PE1}) into (\ref{S1E6+1-}), we have
\begin{equation}\label{T3PE3}
\rho=a^2(a^2-1)^2\cos^6\tau.
\end{equation}
We consider the case $a=1$.
Then, it follows from (\ref{T3PE3}) that $\rho=0$.
Hence, by (\ref{S1E7}) we see that the eigenfunction should be $\sqrt{h(u(x))}$.
However, $\sqrt{h(u(\pm 1))}\neq 0$, and hence the Dirichlet boundary condition is not satisfied.
Thus, the case $a=1$ does not occur.
Since $\sqrt{\mu_1}$ depends continuously on $\tau$ and $a\neq 1$, we see that $\sqrt{\mu_1}<\tau$ for $0\le\tau<\pi/2$ or $\sqrt{\mu_1}>\tau$ for $0\le\tau<\pi/2$.
When $\tau=0$, it is clear that $\sqrt{\mu_1}=\pi/2>0=\tau$, and hence $\sqrt{\mu_1}>\tau$ for $0\le\tau<\pi/2$.
Since $\sqrt{\mu_j}\ge\sqrt{\mu_1}(>\tau)$ for $j\ge 1$, the case $a<1$ does not occur.

It is enough to consider the case $a>1$.
Since $\rho>0$, the candidate of the eigenfunction $\varphi(x)$ is given by (\ref{S1E7-1}), where $\sqrt{h(u(x))}=\cos\tau\sqrt{a^2+\tan^2(\tau x)}$
and
\begin{align*}
\sqrt{\frac{\lambda\rho}{2}}\int_0^x\frac{dy}{h(u(y))}
&=\sqrt{\frac{\lambda\rho}{2}}\frac{1}{\cos^2\tau}\int_0^x\frac{dy}{a^2+\tan^2(\tau y)}\\
&=\sqrt{\frac{\lambda\rho}{2}}\frac{1}{a(a^2-1)\cos^2\tau}\left(a\tau x-\arctan\left(\frac{\tan(\tau x)}{a}\right)\right)\\
&=a\tau x-\arctan\left(\frac{\tan(\tau x)}{a}\right).
\end{align*}
Thus,
\[
\varphi(x)=\cos\tau\sqrt{\frac{\mu}{\tau^2}+\tan^2(\tau x)}\cos\left(\sqrt{\mu}x-\arctan\left(\frac{\tau\tan(\tau x)}{\sqrt{\mu}}\right)+\theta_1\right).
\]
By the same argument as in the proof of Lemma~\ref{L1} we see that, for $j\ge 1$, $\sqrt{\mu_j}$ satisfies
\begin{equation}\label{T3PE4}
\sqrt{\mu_j}-\arctan\left(\frac{\tau\tan\tau}{\sqrt{\mu_j}}\right)=\frac{\pi}{2}j\ \ \textrm{and}\ \ 
\frac{\pi}{2}j<\sqrt{\mu_j}<\frac{\pi}{2}(j+1).
\end{equation}
Here, we use the fact that the function $\sqrt{\mu_j}x-\arctan(\tau\tan(\tau x)/\sqrt{\mu_j})$ is monotonically increasing for $-1<x<1$ provided that $0<\tau<\pi/2$.
The problem (\ref{T3PE4}) is equivalent to (\ref{T3E1}).
In particular, $\mu_j>0$ for $j\ge 1$.
We need not consider the case $\mu\le 0$.
The proof of (i) is complete.
By the same way as in Lemma~\ref{L1} we see that (\ref{T3E2}) is an eigenfunction corresponding to $\mu_j$,
The proof of (ii) is complete.

The assertions (iii) and (iv) clearly follow from (\ref{T3E1}) and (\ref{T3E2}), respectively.
All the proofs are complete.
\end{proof}

\end{theorem}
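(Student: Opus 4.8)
The plan is to run the Wakasa--Yotsutani machinery of Section~2 with $f(u)=e^{-u}$, following the template already executed for the Gel'fand problem in Lemma~\ref{L1}. The first step is to produce an explicit solution $h(u)$ of the key ODE (\ref{S1E5_5}). Guided by the form (\ref{S1E8}) found for $f(u)=e^{u}$, I would try $h(u)=2\mu/\lambda-e^{-\alpha}+e^{-u}$ and verify by direct substitution that it solves (\ref{S1E5_5}); then, inserting the explicit profile (\ref{S6E1}), I expect $h(u(x))=\cos^2\tau\,(a^2+\tan^2(\tau x))$ with $a=\sqrt{\mu}/\tau$, which is manifestly positive on $[-1,1]$. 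Evaluating the constant (\ref{S1E6+1-}) should give $\rho=a^2(a^2-1)^2\cos^6\tau$.

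The next step is the sign analysis that determines which construction is in force. Since $\mu>0$ makes $\rho\ge 0$, the ansatz $\varphi=\sqrt{h}\,W$ leads through (\ref{S1E7+1}) to $W''+W=0$, hence trigonometric eigenfunctions, provided $\rho>0$, i.e. $a\neq 1$. The degenerate case $a=1$ forces $\rho=0$ and $\varphi=\sqrt{h(u)}$, which fails the Dirichlet condition because $h(u(\pm1))\neq 0$; so $a=1$ is excluded. To rule out $a<1$ I would argue by continuity in $\tau$: since $a\neq 1$ throughout and $\sqrt{\mu_1}$ depends continuously on $\tau$, the difference $\sqrt{\mu_1}-\tau$ cannot change sign, and evaluating at $\tau=0$ (where $\sqrt{\mu_1}=\pi/2>0$) pins it positive; as $\sqrt{\mu_j}\ge\sqrt{\mu_1}>\tau$ for all $j$, every $a$ exceeds $1$. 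This is the key point guaranteeing $\mu_j>0$ for all $j$, so the cases $\mu=0$ and $\mu<0$ (Sections~4--5) never arise here, which is the essential structural difference from the $e^{u}$ problem.

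With $a>1$ secured, I would integrate $\theta(x)=\sqrt{\lambda\rho/2}\int_0^x dy/h(u(y))$ using the identity $\int_0^x dy/(a^2+\tan^2(\tau y))=\frac{1}{a(a^2-1)\tau}\bigl(a\tau x-\arctan(\tan(\tau x)/a)\bigr)$, which (after the prefactor $\sqrt{\lambda\rho/2}$ cancels) yields the phase $\sqrt{\mu}\,x-\arctan(\tau\tan(\tau x)/\sqrt{\mu})$ and hence the candidate $\varphi(x)=\cos\tau\,\sqrt{\mu/\tau^2+\tan^2(\tau x)}\,\cos(\sqrt{\mu}\,x-\arctan(\tau\tan(\tau x)/\sqrt{\mu})+\theta_1)$. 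Imposing $\varphi(\pm1)=0$ gives $\sqrt{\mu_j}-\arctan(\tau\tan\tau/\sqrt{\mu_j})=\pi n/2$; I would fix the integer $n=j$ by the zero-count of Proposition~\ref{P1}~(ii), using monotonicity of the phase on $(-1,1)$ (valid since $0<\tau<\pi/2$) to ensure the nodes are counted correctly. A cotangent-to-tangent rewriting then converts this into (\ref{T3E1}), and a suitable choice of $\theta_1$ gives the eigenfunction (\ref{T3E2}), completing (i) and (ii).

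Parts (iii) and (iv) should follow by passing to the limit in the two explicit formulas. In (\ref{T3E1}) the quantity $\tau\tan\tau$ tends to $0$ as $\tau\downarrow0$ and to $+\infty$ as $\tau\uparrow\pi/2$, so the arctan term tends to $0$ and to $\pi/2$ respectively, giving $\sqrt{\mu_j}\to\pi j/2$ and $\sqrt{\mu_j}\to\pi(j+1)/2$. For (iv), as $\tau\uparrow\pi/2$ one has $\tan(\tau x)\to\tan(\pi x/2)$ locally uniformly on $(-1,1)$ (the limit blows up only at $x=\pm1$, which is why the convergence is merely in $C_{loc}((-1,1))$), and substituting $\sqrt{\mu_j}\to\pi(j+1)/2$ into (\ref{T3E2}) produces $\bar\varphi_j$. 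I expect the only genuinely delicate point in the whole argument to be the sign analysis ruling out $a\le 1$; everything else is a transcription of Lemma~\ref{L1} with $\tanh$ replaced by $\tan$ and the appropriate sign of the $(a^2\pm1)$ factors.
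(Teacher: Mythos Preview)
Your plan is correct and follows the paper's proof essentially step for step: the same explicit $h(u)=2\mu/\lambda-e^{-\alpha}+e^{-u}$, the same computation of $h(u(x))$ and $\rho$, the same exclusion of $a=1$ via the boundary condition and of $a<1$ via continuity in $\tau$ from $\tau=0$, the same phase integral, and the same limiting arguments for (iii)--(iv). You have also correctly identified the one delicate point (the sign analysis forcing $a>1$) and the structural reason why the $\mu\le 0$ cases of Sections~4--5 do not recur here.
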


\end{document}